\newcommand{\FF}{\mathbb{F}}
\newcommand{\NN}{\mathbb{N}}
\newcommand{\ZZ}{\mathbb{Z}}
\newcommand{\cA}{\mathcal{A}}
\newcommand{\cD}{\mathcal{D}}
\newcommand{\fb}{\mathfrak{b}}
\newcommand{\fg}{\mathfrak{g}}
\newcommand{\fu}{\mathfrak{u}}
\DeclareMathOperator{\ad}{ad}
\DeclareMathOperator{\Char}{char}
\DeclareMathOperator{\Der}{Der}
\DeclareMathOperator{\Hom}{Hom}
\DeclareMathOperator{\Span}{Span}
\theoremstyle{plain}
\numberwithin{equation}{section}
\newtheorem{Theorem}{Theorem}[section]
\newtheorem{Lemma}[Theorem]{Lemma}
\newtheorem{Corollary}[Theorem]{Corollary}
\theoremstyle{Theorem}
\theoremstyle{remark}
\newtheorem*{Remark}{Remark}
\numberwithin{equation}{section}
\begin{document}
\title[Tensor products]{Tensor products of the defining representations over the Witt algebra in positive characteristic}
\author{Hao Chang \lowercase{and} Yu-Feng Yao}
\address[Hao Chang]{School of Mathematics and Statistics, Central China Normal University, 430079 Wuhan, People's Republic of China}
\email{chang@ccnu.edu.cn}
\address[Yu-Feng Yao]{Department of Mathematics, Shanghai Maritime University, 201306 Shanghai, People's Republic of China}
 \email{yfyao@shmtu.edu.cn}

\subjclass[2010]{17B10, 17B50, 17B70}

\keywords{Witt algebra, tensor product module, $\mathbb{Z}$-graded  module, Jordan-H\"older composition series}

\thanks{This work is supported by National Natural Science Foundation of China (Grant Nos. 11801204, 11771279 and 12071136).}

\begin{abstract}
Let $A(1):=k[X]/(X^p)$ be the natural representation of the Witt algebra $W(1)$ over an algebraically closed field of prime characteristic $p>3$.
In this note, we decompose the $W(1)$-module $A(1)\otimes A(1)$ into two invariant subspaces, and precisely construct their Jordan-H\"older composition series. As a consequence, we obtain all decomposition factors of the tensor product of the simple restricted $W(1)$-module with ``highest" weight $p-1$.
\end{abstract}
\maketitle

\section{Introduction}
The tensor product of representations is an important ingredient in the representation theory.
Let $(\fg,[p])$ be a restricted Lie algebra over an algebraically closed field $k$ of positive characteristic,
and $\fu(\fg)$ its restricted enveloping algebra.
Representations of $\fu(\fg)$ are, in general, no longer completely reducible, and the structures of
tensor products of irreducible representations can be extremely difficult to work out in detail even if we have determined all the irreducible representations.
In the case $\fg=\mathfrak{sl}_2$,
the decomposition of $L(\lambda)\otimes L(\mu)$, tensor products of two simple modules, is completely understood in \cite{Pre91} (see also \cite{BO}).
Let $W_1$ be the infinite-dimensional simple Lie algebra of Cartan type over $\mathbb{C}$.
In \cite{Ta}, Tanaka studied the $2$-fold tensor product of the defining
representation for $W_1$, denoted as $P_{(2)}$.
She decomposed $P_{(2)}$ into two invariant
subspaces and constructed Jordan-H\"older composition series of them.

Block and Wilson \cite{BW} showed that all restricted simple Lie algebras over $k$ are either classical or of Cartan type provided that the characteristic of $k$ is larger than $7$.
Lie algebras of Cartan type fall into four infinite classes of algebras: $W$, $S$, $H$ and $K$.
In this article, we consider the Witt algebra $\fg=W(1)$,
which was found by E. Witt as the first example of non-classical
simple Lie algebra in 1930s.
By definition, the Witt algebra is the full derivation algebra of
the truncated polynomial ring $A(1):=k[X]/(X^p)$, where $p$ is the characteristic of $k$.
In this paper,
we will give a complete structure of the $2$-fold tensor product $\otimes^2 A(1)$.
The purpose of this note is to obtain an analogue of the main result of \cite{Ta} by employing techniques
that work in positive characteristics.

\emph{Throughout this paper, $k$ denotes an algebraically closed field of characteristic $\Char(k)=:p>3$. All vector spaces are assumed to be finite-dimensional over $k$}.

\section{Notations and Preliminaries}

\subsection{The Witt algebra $W(1)$}\label{W1 definition}
Let $A(1):=k[X]/(X^p)$ be the truncated polynomial ring,
whose canonical generator will be denoted by $x:=X+(X^p)$.
The Lie algebra $W(1):=\Der(A(1))$ is called the \textit{Witt algebra}, see \cite[IV]{SF} for more details.
We let $\partial\in W(1)$ denote the derivative with respect to the variable $x$.
Then $\{\partial\}$ is a basis of the $A(1)$-module $W(1)$, so that $\dim_k W(1)=p$.

Throughout this paper, we always denote by $\fg$ the Witt algebra $W(1)$.

It is well-known that $\fg$ is a restricted simple Lie algebra,
and has a natural $\mathbb{Z}$-grading $\fg=\sum_{i=-1}^{p-2}\fg_{i}$,
where $\fg_{i}=ke_i,$ and $e_i:=x^{i+1}\partial$ for $-1\leq i\leq p-2$.
Associated with this grading,
one has the following natural filtration:
$$\fg=\fg_{(-1)}\supset\fg_{(0)}\supset\cdots\supset\fg_{(p-2)}\supset 0,$$
where $$\fg_{(i)}=\sum\limits_{j\geq i}\fg_{j}, \,-1\leq i\leq p-2.$$

A $\mathbb{Z}$-graded $\fg$-module is a $\fg$-module $V=\bigoplus\limits_{i\in\ZZ}V_i$
such that $\fg_i\cdot V_j\subseteq V_{i+j}$,
for any $i,j\in\ZZ$.
The space $A(1)$ has a natural $\mathbb{Z}$-grading as
\begin{equation}\label{grading}
A(1)=\bigoplus\limits_{j=0}^{p-1}A_j,~{\rm where}~A_j=kx^j~{\rm for}~0\leq j\leq p-1.
\end{equation}
The algebra $\fg$ acts naturally on the truncated polynomial ring $A(1)$ with $\fg_iA_j\subseteq A_{i+j}$.
It is a $\mathbb{Z}$-graded $\fg$-module,
and is called the \textit{defining representation} of $\fg$.

The \textit{restricted enveloping algebra} of $\fg$ is denoted by $\fu(\fg)$.
By definition,
$$\fu(\fg):=U(\fg)/(\{x^p-x^{[p]};~x\in\fg\})$$
is a finite-dimensional quotient of the ordinary enveloping algebra $U(\fg)$ (cf. \cite[(II.2.5)]{SF}).
The degree of a monomial $u:=e_{-1}^{j_{-1}}\cdot e_{0}^{j_0}\cdot e_{1}^{j_1}\cdot\cdots\cdot e_{p-2}^{j_{p-2}}\in\fu(\fg)$,
is defined as $d(u):=-1(j_{-1})+0j_0+1j_1+\cdots+(p-2)j_{p-2}$.
Let $\fu(\fg)_l$ be the linear span of monomials of degree $l$,
and we call an element $u\in\fu(\fg)_l$ a
\textit{homogeneous element} of degree $l$ in $\fu(\fg)$.
A homogeneous element $u\in\fu(\fg)_l$ acts homogeneously,
that is, $u$ maps a homogeneous component $A_k$ into $A_{k+l}$.

\begin{Remark}
When $p=2$, $W(1)$ is no longer simple,
and when $p=3$ it is isomorphic to $\mathfrak{sl}(2)$.
\end{Remark}

\subsection{Representations of $W(1)$}\label{representation W1}
Note that the $\mathbb{Z}$-graded
restricted Lie algebra has a standard triangular decomposition $\fg=\fg^{-}\oplus\fg_0\oplus\fg^{+}$ such that $\fg^{-}=\fg_{-1}=ke_{-1}$,
$\fg_0=ke_0$ and $\fg^{+}=\fg_{(1)}$.
Then $\fg^{+}$ is an ideal of the trigonalizable Lie algebra $\fb^+:=\fg_0\oplus\fg^{+}$.
In this paper,
we are only interested in those irreducible representations which are restricted, i.e.,
those irreducible $\fu(\fg)$-modules.

Recall that the simple $\fu(\fg)$-modules are well-known and were first determined by Chang \cite{Chang}.
To describe them,
we define a one-dimensional $\fb^+$-module $k_{\lambda}$ on which $\fg^+$ acts trivially and $e_0$ acts by multiplication via the scalar $\lambda\in\{0,\dots,p-1\}$.
Then one defines the corresponding Verma module $Z(\lambda):=\fu(\fg)\otimes_{\fu(\fb^+)}k_\lambda$.
It is easy to see that $Z(\lambda)$ is $p$-dimensional and has a basis $\{m_0,\dots,m_{p-1}\}$ where the action
of $\fg$ is given by
\begin{equation}\label{Nakano formula}
e_k.m_j=(j+k+1+(k+1)\lambda)m_{j+k}
\end{equation}
(see for instance \cite[(2.2.1)]{N92}).
The $Z(\lambda)$ are all simple except for $Z(0)$ and $Z(p-1)$.
The former has a trivial simple quotient and the latter has
a trivial submodule and $(p-1)$-dimensional simple quotient.
We denote the corresponding simple quotient modules by $L(\lambda)$ ($\lambda\in\{0,\dots,p-1\}$).
Direct computation shows that $A(1)\cong Z(p-1)$.

On the other hand, we may recognize the simple restricted module as the following:
Let $L=L(\lambda)$, there is, up to scalars, a unique vector, $m_0$ killed by $e_{-1}=\partial$.
By formula (\ref{Nakano formula}) $e_0.m_0=(\lambda+1)m_0$ whenever $L(\lambda)=Z(\lambda)$.
For the trivial module $L(0)$, of course $e_0$ has weight zero. One checks that for $L(p-1)$,
$e_0$ has weight $1$ on a vector killed by $e_{-1}$. Hence, the action of $e_{-1}$ and $e_0$ on $L$ determines $L$ up to isomorphism.
It should be notice that we may identify the adjoint module as $L(p-2)$: The element $e_{-1}$ is killed by $\ad e_{-1}$ and $[e_0, e_{-1}]=-e_{-1}$.
Thus $\lambda+1=-1$ modulo $p$ and so $\lambda=p-2$.

Let $\fb^-:=\fg_0\oplus\fg^{-}$. Similarly, we can define $L^-(\lambda)$ to be the simple head of $Z^-(\lambda):=\fu(\fg)\otimes_{\fu(\fb^-)}k_\lambda$.
By the above observation, we record the following fact:
\begin{Lemma}
The following statements hold:
\begin{eqnarray*}L(\lambda)=
\begin{cases}
L^-(\lambda), &\lambda=0,\cr
L^-(\lambda+1), &1\leq\lambda\leq p-2,\cr
L^-(1), &\lambda=p-1.
\end{cases}
\end{eqnarray*}
\end{Lemma}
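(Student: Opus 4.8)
The plan is to use the characterization recorded just before the lemma: a simple restricted $\fg$-module is determined up to isomorphism by the $e_0$-weight of its unique (up to scalars) vector annihilated by $e_{-1}=\partial$. Since $L^-(\mu)$ is the simple head of the restricted module $Z^-(\mu)$, it is a simple $\fu(\fg)$-module and is therefore isomorphic to some $L(\lambda)$; the strategy is to compute this distinguishing $e_0$-weight on both families and match them up, which pins down the correct $\lambda$ in each case.

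First I would identify the invariant of $L^-(\mu)$. The canonical generator of $Z^-(\mu)=\fu(\fg)\otimes_{\fu(\fb^-)}k_\mu$ is annihilated by $\fg^-=ke_{-1}$ and has $e_0$-weight $\mu$; as the simple head is a nonzero quotient, the image of this generator is a nonzero $e_{-1}$-null vector of $e_0$-weight $\mu$, and by the uniqueness statement it is precisely the distinguished vector. Thus the invariant of $L^-(\mu)$ equals $\mu$. Next I would read off the invariant of $L(\lambda)$. For $1\le\lambda\le p-2$ we have $L(\lambda)=Z(\lambda)$, and formula (\ref{Nakano formula}) gives $e_{-1}.m_0=0$ together with $e_0.m_0=(\lambda+1)m_0$, so the invariant is $\lambda+1$; for the two degenerate cases I would invoke the facts already verified in the text, namely that the trivial module $L(0)$ has invariant $0$ and that $L(p-1)$ has invariant $1$. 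As $\lambda$ runs over $0,\dots,p-1$ these invariants take the $p$ distinct values $0,2,3,\dots,p-1,1$, so the $e_0$-weight genuinely separates the $L(\lambda)$ and the matching is unambiguous.

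Comparing the two computations yields the lemma: $L^-(0)$ shares the invariant $0$ with $L(0)$; $L^-(1)$ shares the invariant $1$ with $L(p-1)$; and for $2\le\mu\le p-1$ the module $L^-(\mu)$ shares the invariant $\mu$ with $L(\mu-1)$, which is exactly $L(\lambda)\cong L^-(\lambda+1)$ for $1\le\lambda\le p-2$. The only point that genuinely needs care is the uniqueness of the $e_{-1}$-null line in $L^-(\mu)$, since this is what licenses the passage from ``the generator's image is some $e_{-1}$-null vector'' to ``it is the distinguishing vector''; granting the uniqueness statement quoted from the text for every simple $\fu(\fg)$-module, the remainder of the argument is pure bookkeeping of $e_0$-weights.
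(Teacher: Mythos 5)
Your proof is correct and takes essentially the same route as the paper, which records the lemma as an immediate consequence of the preceding observation that a simple restricted $\fg$-module is determined up to isomorphism by the $e_0$-weight of its unique $e_{-1}$-null line, with the weights $0$, $\lambda+1$, and $1$ for $L(0)$, $L(\lambda)=Z(\lambda)$ ($1\leq\lambda\leq p-2$), and $L(p-1)$ matched against the weight $\mu$ of the image in $L^-(\mu)$ of the canonical generator of $Z^-(\mu)$. Your write-up merely makes explicit the bookkeeping (nonvanishing of the generator's image, distinctness of the $p$ invariants) that the paper leaves implicit in its one-line justification.
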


\section{$\mathbb{Z}$-graded $W_1$-modules}
Let us consider the $2$-fold tensor product $A_{(2)}:=\otimes^2 A(1)$.
We can identify $A_{(2)}$ with the truncated polynomial ring $k[X_1,X_2]/(X_1^p,X_2^p)$,
whose canonical generators will be denoted by $x_1,x_2$.
The action of $e_j$ on $A_{(2)}$ is
\begin{equation}\label{action formula}
e_j\cdot f(x_1,x_2):=x_1^{j+1}\frac{\partial}{\partial x_1}f+x_2^{j+1}\frac{\partial}{\partial x_2}f.
\end{equation}

Note that $\cA:=A_{(2)}$ is $\mathbb{Z}$-graded by $\cA_i:=\langle x_1^{\alpha_1}x_2^{\alpha_2};~\alpha_1+\alpha_2=i\rangle$.
In view of (\ref{action formula}),
the module $\cA$ obtains the structure of a $\mathbb{Z}$-graded $\fg$-module.

\begin{Lemma}\label{ei injective}
Let $\cA=\cA_0\oplus\cA_1\oplus\cdots\oplus\cA_{2(p-1)}$ be the $\mathbb{Z}$-graded decomposition
and let $s\in\{1,2\}$ be given.
Then the operator $e_s:\cA_n\rightarrow\cA_{n+s}$ is injective whenever $n>0$ and $n+s<p$.
\end{Lemma}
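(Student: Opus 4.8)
The plan is to make the action of $e_s$ on $\cA_n$ completely explicit in the monomial basis and then read injectivity off the resulting (bidiagonal) linear system. First I would fix the basis $\{x_1^j x_2^{n-j} : 0\le j\le n\}$ of $\cA_n$ — legitimate because $n<p$ — and compute, directly from (\ref{action formula}),
\[
e_s\cdot x_1^j x_2^{n-j}=j\,x_1^{j+s}x_2^{n-j}+(n-j)\,x_1^{j}x_2^{n-j+s}.
\]
The crucial observation here is that the hypothesis $n+s<p$ guarantees that both exponents $j+s$ and $n-j+s$ are strictly less than $p$, so this expression is an honest element of $\cA_{n+s}$, with no relations coming from $X_1^p=X_2^p=0$ intervening. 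Thus $e_s$ really is the naive "raising" operator in this range.

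Next I would take an arbitrary $f=\sum_{j=0}^n c_j\,x_1^j x_2^{n-j}$ with $e_s f=0$ and compare coefficients of the basis monomial $x_1^m x_2^{n+s-m}$ of $\cA_{n+s}$ for each $m$. Adopting the convention $c_j:=0$ for $j\notin\{0,\dots,n\}$, this yields the single family of relations
\[
(n-m)\,c_m+(m-s)\,c_{m-s}=0,\qquad 0\le m\le n+s.
\]
I would then solve this system by an upward induction on $m$. For $0\le m\le n-1$ the coefficient $n-m$ lies in $\{1,\dots,n\}$, hence is a nonzero element of $k$ since $n\le p-2$; as $c_{m-s}$ is already known to vanish (by the inductive hypothesis if $m-s\ge 0$, by the boundary convention if $m-s<0$), the relation forces $c_m=0$. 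This gives $c_0=\dots=c_{n-1}=0$ uniformly in $s\in\{1,2\}$, with no need to separate the two cases.

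Finally, the last coefficient $c_n$ is pinned down by the relation at $m=n+s$, which collapses to $n\,c_n=0$; since $0<n<p$ this gives $c_n=0$ as well, so $f=0$ and $e_s$ is injective. The one place where care is needed — and the reason the top relation must be invoked separately — is the index $m=n$, where the diagonal multiplier $n-m$ vanishes modulo $p$ and the induction cannot be continued across it; this is precisely the degeneracy that the hypothesis $n>0$ repairs, through the equation at $m=n+s$. I therefore expect the only (modest) obstacle to be bookkeeping: verifying which relations are genuinely available near the two ends $m=0$ and $m=n+s$, and confirming that $n>0$ together with $n+s<p$ are exactly what make every multiplier $n-m$ (for $m<n$) and the leading coefficient $n$ invertible in $k$ while simultaneously ruling out truncation.
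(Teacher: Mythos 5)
Your proposal is correct and follows essentially the same route as the paper's proof: both expand a general element of $\cA_n$ in the monomial basis, extract the bidiagonal system $(n-m)c_m+(m-s)c_{m-s}=0$ for $0\le m\le n+s$ from (\ref{action formula}), and solve it by increasing index, using $n+s<p$ to make the multipliers $n-m$ nonzero in $k$. Your only addition is to spell out the degenerate step at $m=n$, where the diagonal coefficient vanishes and one must invoke the relation at $m=n+s$ (giving $n\,c_n=0$, whence $c_n=0$ since $0<n<p$) --- a detail the paper compresses into ``we solve these equations successively,'' so your care there is a welcome clarification rather than a deviation.
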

\begin{proof}
Take an element $v\in\cA_n$ written as $v=\sum_{i=0}^na_ix_1^ix_2^{n-i}$ with coefficients $a_i\in k$,
we have
\begin{equation*}
e_s.v=\sum\limits_{i=0}^{n+s}\left((i-s)a_{i-s}+(n-i)a_i\right)x_1^ix_2^{n+s-i},
\end{equation*}
where we make the convention that $a_i=0$ for $i<0$ or $i>n$.
Suppose that $v\in\ker(e_s)$, then
\begin{equation*}
(i-s)a_{i-s}+(n-i)a_i=0~~~\mathrm{for}~~~0\leq i\leq n+s.
\end{equation*}
For $i=0$ we obtain $-sa_{-s}+na_0=0$, so that $a_0=0$.
We solve these equations successively, we obtain $a_i=0$ for $0\leq i\leq n$.
\end{proof}

Recall that $\fb^+=\fg_0\oplus\fg^+$.
Let $\fu(\fb^+)$ be the restricted enveloping algebra of $\fb^+$.
This is a $\mathbb{Z}$-graded subalgebra of $\fu(\fg)$, i.e.,
$$\fu(\fb^+)_l=\fu(\fb^+)\cap\fu(\fg)_l.$$
Moreover, $\fu(\fb^+)$ has a natural PBW-filtration:
\begin{equation}\label{PBW filtration}
k=\fu(\fb^+)^{(0)}\subseteq\fu(\fb^+)^{(1)}\subseteq\fu(\fb^+)^{(2)}\subseteq\cdots\subseteq\fu(\fb^+),
\end{equation}
where $\fu(\fb^+)^{(r)}:=\Span_k\{u=e_0^{r_0}e_1^{r_1}\cdots e_{p-2}^{r_{p-2}};~r_0+r_1+\cdots+r_{p-2}\leq r\}$.

Given an integer $i\in\mathbb{Z}$, we denote by $\overline{i}$ the corresponding element in $\{0,\dots,p-1\}$ such that
$\overline{i}\equiv i~\mathrm{mod}~p$.

Let $V\subseteq\cA$ be a $\mathbb{Z}$-graded $\fg$-submodule generated by an element $v_i\in\cA_i\cap\ker e_{-1}$.
It follows that $V=\fu(\fb^+).v_i$.
Clearly, the module $V$ is a {\it lowest weight module}.
Using (\ref{action formula}) one can show by direct computation that $v_i$ is a weight vector with weight $\overline{i}$,
the so-called {\it lowest weight} of $V$.

\begin{Lemma}\label{e-1 surjective}
Let $V\subseteq\cA$ be a $\mathbb{Z}$-graded $\fg$-submodule generated by an element $v_i\in\cA_i\cap\ker e_{-1}$ and
$V=V_i\oplus\cdots\oplus V_s$ be the $\mathbb{Z}$-graded decomposition.
If the lowest weight is not $0$,
then the operator $e_{-1}:V_{l+i+1}\rightarrow V_{l+i}$ is surjective whenever $0\leq l<p-2$.
\end{Lemma}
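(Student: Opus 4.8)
\emph{Strategy.} The cleanest route is to change coordinates linearly and reduce the assertion to the surjectivity of a single partial derivative. Since $p>2$, put $u:=x_1+x_2$ and $v:=x_1-x_2$. Frobenius gives $x_1^p=2^{-p}(u^p+v^p)$ and $x_2^p=2^{-p}(u^p-v^p)$, so the defining relations $x_1^p=x_2^p=0$ become $u^p=v^p=0$ and $\cA=k[u,v]/(u^p,v^p)$, still graded by total degree. In these coordinates $e_{-1}=2\p_u$ and $e_0=u\p_u+v\p_v$ is the Euler operator. Hence $\ker e_{-1}\cap\cA=k[v]$, its degree-$i$ part is $kv^i$, and therefore (up to a scalar) $v_i=v^i$, a weight vector of weight $\overline i$ with $\overline i\neq0$ precisely when $1\le i\le p-1$, as recorded before the lemma. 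The claim to prove becomes: $\p_u\colon V_{i+l+1}\to V_{i+l}$ is surjective for $0\le l<p-2$.

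\emph{Reduction of the kernel.} Because $\ker\p_u\cap\cA=k[v]$, the space $\ker(e_{-1}|_{V_m})=V_m\cap k[v]$ is spanned by $v^m$; in particular it is at most one--dimensional, and it \emph{vanishes as soon as} $m\ge p$, since then $v^m=0$. This already removes the only genuinely awkward homogeneous component, namely the one on which $e_0$ acts by $0$: in the range $i\le i+l\le i+p-3$ the congruence $i+l\equiv0\ (\mathrm{mod}\ p)$ forces $i+l=p$, and there $v^{p}=0$, so no lowest weight vector obstructs. By rank--nullity it now suffices to prove the numerical identity $\dim V_{i+l+1}-\dim V_{i+l}=\dim\bigl(V_{i+l+1}\cap k[v]\bigr)$, equivalently that $V_{i+l}\subseteq\p_u V_{i+l+1}$.

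\emph{The crux.} I would establish the latter by showing two structural facts about $V=\fu(\fb^+)v^i$: each $V_n$ is spanned by monomials $u^av^b$, and $V$ is stable under multiplication by $u$ in the range $a\le p-2$. Granting these, a monomial $u^av^b\in V_{i+l}$ (necessarily with $a\le p-2$, the truncation $v^p=0$ excluding the non-liftable monomials $u^{p-1}v^b$ in the stated range) equals $\p_u\bigl(\tfrac1{a+1}u^{a+1}v^b\bigr)$ with $u^{a+1}v^b\in V_{i+l+1}$, giving surjectivity. Both structural facts I expect to prove by induction on the $u$--degree from the explicit expressions $e_1=\tfrac{u^2+v^2}{2}\p_u+uv\,\p_v$ and $e_2=\tfrac{u^3+3uv^2}{4}\p_u+\tfrac{3u^2v+v^3}{4}\p_v$: applied to $v^b$ the operator $e_1$ yields $b\,uv^b$, which starts the induction, and on a general monomial it raises the $u$--degree with strictly lower corrections whose leading coefficients are controlled by $\overline i\neq0$ (the same nonvanishing that powers the base case $e_{-1}e_1v^i=2\overline i\,v^i$ through $[e_{-1},e_1]=2e_0$). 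The main obstacle is precisely this description of $V$: unlike the full space $\cA$, on which $\p_u$ is visibly onto in the relevant degrees, $V$ is only \emph{a priori} closed under the $\fg$--action, so the real work is to show that closedness under $e_1,e_2,\dots$ forces closedness under multiplication by $u$, and that the truncation $u^p=v^p=0$ confines all exceptional behaviour to $l\ge p-2$, outside the stated range.
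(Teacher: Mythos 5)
Your coordinate change is sound as far as it goes: with $u=x_1+x_2$, $v=x_1-x_2$ one indeed has $\cA=k[u,v]/(u^p,v^p)$, $e_{-1}=2\partial_u$, $\ker e_{-1}\cap\cA=k[v]$, and $v_i=v^i$ up to a scalar. But the proof then rests entirely on the two structural claims you yourself flag as the main obstacle --- that each $V_n$ is spanned by monomials $u^av^b$, and that $V$ is stable under multiplication by $u$ --- and these are not merely unproven: they are false within the scope of the lemma. Take $i=1$, which satisfies the hypothesis $\overline{i}\neq 0$. Then $v_1=v=x_1-x_2$ and $V=\fu(\fg).v_1=\Span_k\{x_1^m-x_2^m;\ 1\leq m\leq p-1\}$, so every graded piece $V_m=k(x_1^m-x_2^m)$ is one-dimensional. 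In your coordinates $x_1^3-x_2^3=\frac{1}{4}(3u^2v+v^3)$ is not a monomial, and $u\cdot(uv)=u^2v\notin V_3$, so both (a) and (b) fail, even though the conclusion of the lemma holds here (via $e_{-1}(x_1^{m+1}-x_2^{m+1})=(m+1)(x_1^m-x_2^m)$). Your own formulas already detect this: $e_1^2v^i$ and $e_2v^i$ are, after scaling by $i$, the vectors $(\tfrac12+i)u^2v^i+\tfrac12 v^{i+2}$ and $\tfrac34u^2v^i+\tfrac14v^{i+2}$, whose determinant is $\tfrac{i-1}{4}$; for $i\equiv 1 \pmod p$ they are proportional and $V_{i+2}$ contains neither monomial. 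Even for other $i$, the monomial description of $V_{i+l}$ would require the nonvanishing mod $p$ of a whole family of such determinants as $l$ grows, which is essentially the assertion to be proved --- so the ``reduction'' has not actually reduced anything.

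The paper's proof avoids describing $V$ inside $\cA$ altogether: it shows $\fu(\fb^+)_l.v_i\subseteq e_{-1}\bigl(\fu(\fb^+)_{l+1}.v_i\bigr)$ by a double induction on the grading degree $l$ and the PBW filtration level $r$, using only the relations $[e_{-1},e_{l+1}]=(l+2)e_l$ and $e_je_{-1}=e_{-1}e_j-(j+1)e_{j-1}$. The hypotheses enter exactly where they should: $\overline{i}\neq 0$ makes the base case $v_i=e_{-1}\bigl(\tfrac{1}{2\overline{i}}e_1v_i\bigr)$ work via $[e_{-1},e_1]=2e_0$, and $l<p-2$ guarantees $l+2\not\equiv 0\pmod p$. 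This formal argument is also what allows the lemma to be applied later to lowest-weight vectors in the quotients $\cA_s^+$ and $\cA_a^+$, where your identification $v_i=v^i$ inside $\cA$ is unavailable (e.g. $e_{-1}(x_1x_2)=u$ lies in $\cA_s'$ but is nonzero in $\cA$). To repair your approach you would have to replace (a) and (b) by statements provable from the $\fg$-action alone --- at which point you are re-deriving the commutator induction of the paper.
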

\begin{proof}
We show the assertion by induction on $l$. When $l=0$, since $V_i=kv_i$ and $v_i=e_{-1}(\frac{1}{2}e_1v_i)$, the assertion is true.
When $l=1$, $V_{1+i}=ke_1v_i$, and $e_1v_i=e_{-1}(\frac{1}{3}e_2v_i)$, the assertion is also true. In the following we assume that $l\geq 2$, and suppose that the assertion is true for $l-1$. We aim to show that the assertion is true for $l$. For that, note that $V_{l+i}=\fu(\fb^+)_l.v_i$.
We need to show that for any $u\in\fu(\fb^+)_l\cap\fu(\fb^+)^{(r)}$, there exists some $u'\in\fu(\fb^+)_{l+1}\cap\fu(\fb^+)^{(r)}$ such that
\begin{equation}\label{indunction 1}
u.v_i=e_{-1}(u'v_i)\in e_{-1}(V_{l+i+1})
\end{equation}
by induction on $l$ and $r$. Note that (\ref{indunction 1}) is true for $l=1$ and any $r$. Suppose (\ref{indunction 1}) holds for any integer less then $l$ and any $r$. Next we show that it also holds for $l$ and any $r$.  When $r=0$, since $l\geq 2$, we have $u=0$, (\ref{indunction 1}) is obvious true. When $r=1$,
it follows from direct computation that
$$(l+2)e_l.v_i=[e_{-1},e_{l+1}].v_i=e_{-1}e_{l+1}.v_i.$$
The assumption $l<p-2$ now yields $l+2\neq 0$.
Hence the assertion is true for $r=1$.

Assume that (\ref{indunction 1}) is true for some $r\geq 1$. Next we prove that
\begin{equation}\label{indunction j>=0}
e_j\tilde{u}.v_i\in e_{-1}(\fu(\fb^+)_{l+1}\cap\fu(\fb^+)^{(r+1)}).v_i
\end{equation}
for $\tilde{u}\in\fu(\fb^+)_{l-j}\cap\fu(\fb^+)^{(r)}$ by induction on $j\geq 0$.
When $j=0$, $e_0\tilde{u}.v_i=(l+i)\tilde{u}.v_i$. The induction hypothesis implies that (\ref{indunction j>=0}) is true for $j=0$.
Now assume $j\geq 1$, we assume that $e_{j-1}\tilde{u}'.v_i=e_{-1}(\hat{u}'v_i)$ for some $\hat{u}'\in\fu(\fb^+)_{l+1}\cap\fu(\fb^+)^{(r+1)}$, where
$\tilde{u}'\in\fu(\fb^+)_{l-j+1}\cap\fu(\fb^+)^{(r)}$ with $\tilde{u}.v_i=e_{-1}\tilde{u}'.v_i$,
then we have
\begin{eqnarray*}
e_j\tilde{u}.v_i&=&e_je_{-1}\tilde{u}'.v_i\\
&=&(e_{-1}e_j-(j+1)e_{j-1})\tilde{u}'.v_i\\
&=&e_{-1}(e_j\tilde{u}'-(j+1)\hat{u}').v_i\in e_{-1}(\fu(\fb^+)_{l+1}\cap\fu(\fb^+)^{(r+1)}).v_i.
\end{eqnarray*}
Thus the assertion (\ref{indunction j>=0}) holds for any $j$.
It follows that $u.v_i\in e_{-1}(V_{l+i+1})$ for all $u\in\fu(\fb^+)_l$,
as desired.
\end{proof}

\begin{Lemma}\label{subquotient}
Let $V\subseteq\cA$ be a $\mathbb{Z}$-graded $\fg$-submodule generated by an element $v_i\in\cA_i\cap\ker e_{-1}$.
Then $V$ has $L^{-}(\overline{i})$ as its quotient module.
\end{Lemma}
\begin{proof}
This follows directly from the universal property of $Z^{-}(\overline{i})$.
\end{proof}

\section{The module $A_{(2)}$}
The module $A_{(1)}=A(1)$ is nothing but $Z(p-1)$ as a restricted $\fg$-module (see Section \ref{representation W1}).
The module $A(1)$ has an invariant subspace $k$, where $k$ denotes the space of constant functions in $A(1)$.
The quotient module $A(1)/k$ is simple and $A(1)/k\cong L(p-1)$ as $\fg$-modules.

In this section,
we will consider the structure of the $2$-fold tensor product $\cA=A_{(2)}:=A(1)\otimes A(1)$.
\subsection{Direct sum decomposition}
Let $S_2$ be the symmetric group of degree $2$.
The group algebra $kS_2$ acts naturally on $\cA=A_{(2)}$ from the right.
Let $\cA_s$ and $\cA_a$ be the subspaces consisting of symmetric elements and anti-symmetric ones, respectively.
Our general assumption $p>3$ implies that $kS_2$ is semisimple,
and it commutes with $W(1)$.
\begin{Lemma}
Let $\fg:=W(1)$ and $\cA=A_{(2)}$.
Then the $\fg$-module $\cA$ is decomposed as $\cA=\cA_s\oplus \cA_a$ into two $\fg$-modules.
\end{Lemma}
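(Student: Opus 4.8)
The plan is to exploit the two commuting actions on $\cA$: that of $\fg=W(1)$ and that of the group algebra $kS_2$. First I would introduce the nontrivial element $\sigma\in S_2$, acting on $\cA$ by swapping the two tensor factors, i.e. $\sigma(x_1^{\alpha}x_2^{\beta})=x_1^{\beta}x_2^{\alpha}$. Since $p>3$ the scalar $2$ is invertible in $k$, so I can form the two elements
$$e_s=\tfrac{1}{2}(1+\sigma),\qquad e_a=\tfrac{1}{2}(1-\sigma)$$
of $kS_2$. A direct check shows these are orthogonal idempotents with $e_s+e_a=1$ and $e_se_a=e_ae_s=0$, whose images are precisely the symmetric and anti-symmetric subspaces: $e_s\cA=\cA_s$ and $e_a\cA=\cA_a$. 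This already yields the vector-space decomposition $\cA=\cA_s\oplus\cA_a$.

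The crucial point is that $e_s$ and $e_a$ are not merely linear projections but $\fg$-module endomorphisms. This rests on the fact, recorded above, that the $S_2$-action commutes with the $\fg$-action. To make this transparent I would inspect the action formula (\ref{action formula}), $e_j\cdot f=x_1^{j+1}\frac{\partial}{\partial x_1}f+x_2^{j+1}\frac{\partial}{\partial x_2}f$, which is manifestly symmetric in $x_1$ and $x_2$; hence $\sigma\circ e_j=e_j\circ\sigma$ for every $j$, and so $\sigma$ commutes with all of $\fg$. Consequently the idempotents $e_s,e_a$, being polynomials in $\sigma$, also commute with the $\fg$-action.

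Once this commutativity is in hand, the conclusion is immediate: since $e_s$ is a $\fg$-equivariant idempotent, its image $\cA_s=e_s\cA$ is a $\fg$-submodule, and likewise $\cA_a=e_a\cA$ is a $\fg$-submodule. Therefore $\cA=\cA_s\oplus\cA_a$ is a decomposition into two $\fg$-modules, as claimed. I expect no genuine obstacle here: the only substantive ingredients are the commuting of the two actions, which is clear from the symmetry of the action formula, and the invertibility of $2$ needed to split $kS_2$, both of which are guaranteed by the standing hypothesis $p>3$.
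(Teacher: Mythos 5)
Your proof is correct and takes essentially the same route as the paper: both rest on the invertibility of $2$ (semisimplicity of $kS_2$) and on the fact that the $S_2$-action commutes with the $W(1)$-action. Where the paper quotes the general isotypic decomposition of $\cA$ as a $kS_2$-module, you simply make it explicit via the orthogonal idempotents $\frac{1}{2}(1\pm\sigma)$ and verify the commutation directly from the symmetry of the action formula, which is a slightly more self-contained rendering of the identical argument.
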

\begin{proof}
Note that $kS_2$ is a semisimple algebra.
By general theory, we can decompose $\cA$ as a $kS_2$-module:
\begin{equation*}\label{S2 decomposition}
\cA=\sum\limits_{D\in\mathcal{Y}_2}\Hom_{S_2}(\sigma_D, \cA)\otimes\sigma_D,
\end{equation*}
where $\mathcal{Y}_2$ is the set of Young diagram of size $2$ and $\sigma_D$ is the corresponding irreducible representation of $S_2$.
Hence, the following isomorphism is obvious.
$$\cA_s\cong\Hom_{S_2}(\sigma_1,\cA)\otimes\sigma_1,~~~\cA_a\cong\Hom_{S_2}(\sigma_2,\cA)\otimes\sigma_2$$
as $\fg\times S_2$-modules, where $\sigma_1$ is the trivial representation and $\sigma_2$ is the sign representation.
\end{proof}

The space $A_{(2)}^+:=\otimes^2(A(1)/k)=\otimes^2(L(p-1))$ is called the {\it top level} of $A_{(2)}$.
By the same token, the top level $\cA^+=A_{(2)}^+$ is also decomposed as $\cA^+=\cA_s^+\oplus \cA_a^+$.

More precisely, we put
\begin{equation*}\label{bs prime}
\cA_s'=\Span_k\{x_1^i+x_2^i;~i=0,1,\cdots,p-1\},
\end{equation*}
\begin{equation*}\label{ba prime}
\cA_a'=\Span_k\{x_1^i-x_2^i;~i=1,2,\cdots,p-1\},
\end{equation*}
then $$\cA_s^+=\cA_s/\cA_s',~\cA_a^+=\cA_a/\cA_a'$$ and
$$\cA_s'\cong A(1)\cong Z(p-1),~\cA_a'\cong A(1)/k\cong L(p-1).$$

We record the following basic observation:
\begin{Lemma}
Dimensions of weight subspaces of these modules are as follows:
\begin{center}
\begin{tabular}{|c|c|c|c|c|c|}
\hline
weight   & $0$ ~~~& $1$~~~~& $2$~~~~~& $\cdots$~~~~~& $p-1$\\
\hline
$\cA$      & $p$    & $p$    & $p$ & $\cdots$&$p$\\
\hline
$\cA_s$    & $\frac{p+1}{2}$    & $\frac{p+1}{2}$     & $\frac{p+1}{2}$ & $\cdots$&$\frac{p+1}{2}$ \\
\hline
$\cA_a$    & $\frac{p-1}{2}$    & $\frac{p-1}{2}$     & $\frac{p-1}{2}$ & $\cdots$&$\frac{p-1}{2}$ \\
\hline
$\cA^+$    & $p-1$    & $p-2$    & $p-2$& $\cdots$&$p-2$\\
\hline
$\cA_s^+$  & $\frac{p-1}{2}$    & $\frac{p-1}{2}$     & $\frac{p-1}{2}$ & $\cdots$&$\frac{p-1}{2}$ \\
\hline
$\cA_a^+$  & $\frac{p-1}{2}$    & $\frac{p-3}{2}$     & $\frac{p-3}{2}$ & $\cdots$&$\frac{p-3}{2}$ \\
\hline
\end{tabular}
\end{center}
\end{Lemma}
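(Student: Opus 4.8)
The plan is to observe that all six modules in the table are graded by the $e_0$-weight, so that computing the dimension of each weight space reduces to an elementary count of monomials. From the action formula (\ref{action formula}) one has $e_0\cdot x_1^{\alpha_1}x_2^{\alpha_2}=(\alpha_1+\alpha_2)x_1^{\alpha_1}x_2^{\alpha_2}$, whence the monomial $x_1^{\alpha_1}x_2^{\alpha_2}$ lies in the weight space indexed by $\overline{\alpha_1+\alpha_2}\in\{0,\dots,p-1\}$. Since the monomials $x_1^{\alpha_1}x_2^{\alpha_2}$ with $0\le\alpha_1,\alpha_2\le p-1$ form a basis of $\cA$, the dimension of each weight space is just the number of such monomials of the prescribed weight, subject to the relevant constraint.

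First I would treat the row for $\cA$. For a fixed weight $w$ and each $\alpha_1\in\{0,\dots,p-1\}$ there is a unique $\alpha_2\in\{0,\dots,p-1\}$ with $\alpha_1+\alpha_2\equiv w\pmod p$, so $\cA$ has exactly $p$ monomials of each weight, and every weight space of $\cA$ is $p$-dimensional. For the rows $\cA_s$ and $\cA_a$ I would analyse the action of the transposition in $S_2$ on these $p$ monomials: it fixes $x_1^{\alpha_1}x_2^{\alpha_2}$ precisely when $\alpha_1=\alpha_2$, i.e. $2\alpha_1\equiv w\pmod p$. As $p$ is odd, $2$ is invertible modulo $p$, so there is exactly one such diagonal monomial, and the remaining $p-1$ monomials split into $\frac{p-1}{2}$ orbits of size two. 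Each size-two orbit contributes one symmetric and one antisymmetric basis vector, while the diagonal monomial is symmetric; this yields dimension $\frac{p-1}{2}+1=\frac{p+1}{2}$ for the weight-$w$ subspace of $\cA_s$ and $\frac{p-1}{2}$ for that of $\cA_a$, uniformly in $w$.

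For the top-level rows I would use the quotient descriptions $\cA_s^+=\cA_s/\cA_s'$ and $\cA_a^+=\cA_a/\cA_a'$ together with the explicit spanning sets of $\cA_s'$ and $\cA_a'$. The vector $x_1^i+x_2^i$ has weight $\overline{i}$, so $\cA_s'$ meets every weight $0,1,\dots,p-1$ in a one-dimensional subspace; subtracting gives dimension $\frac{p+1}{2}-1=\frac{p-1}{2}$ in each weight for $\cA_s^+$. Similarly $x_1^i-x_2^i$ has weight $\overline{i}$ with $i$ ranging over $1,\dots,p-1$, so $\cA_a'$ is one-dimensional in each nonzero weight and zero in weight $0$; subtracting gives $\frac{p-1}{2}$ in weight $0$ and $\frac{p-1}{2}-1=\frac{p-3}{2}$ in each nonzero weight for $\cA_a^+$. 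Finally the $\cA^+$ row follows from the decomposition $\cA^+=\cA_s^+\oplus\cA_a^+$, which gives $\frac{p-1}{2}+\frac{p-1}{2}=p-1$ in weight $0$ and $\frac{p-1}{2}+\frac{p-3}{2}=p-2$ otherwise; one can also check this directly by counting monomials $x_1^{\alpha_1}x_2^{\alpha_2}$ with $1\le\alpha_1,\alpha_2\le p-1$.

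There is no serious obstacle here, the argument being a weight-by-weight count. The only point requiring care is that the weight is the residue $\overline{\alpha_1+\alpha_2}$ modulo $p$ rather than the integer degree $\alpha_1+\alpha_2$, so a single weight space generally pools the two graded components $\cA_i$ and $\cA_{i+p}$; and the single arithmetic input is the invertibility of $2$ modulo the odd prime $p$, which produces exactly one diagonal fixed monomial per weight and thereby forces the uniform $\frac{p\pm1}{2}$ split between the symmetric and antisymmetric parts.
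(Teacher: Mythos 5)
Your proof is correct, but it takes a different route from the paper's. The paper's proof invokes a structural fact recorded in Section 2.2 --- every nonzero weight space of a restricted $\fg$-module has the same dimension --- so that $\dim_k M=\dim_k M_0+(p-1)\dim_k M_\lambda$ for $\lambda\neq 0$, and the table then follows from the known total dimensions together with a small direct computation; in particular the uniformity across weights $1,\dots,p-1$ comes for free from representation theory rather than being verified. You instead do a purely combinatorial monomial count: $e_0$ scales $x_1^{\alpha_1}x_2^{\alpha_2}$ by $\alpha_1+\alpha_2$, each residue class $w$ contains exactly $p$ monomials, the transposition fixes exactly one of them (invertibility of $2$ mod the odd prime $p$), so the $S_2$-orbit structure yields $\frac{p+1}{2}$ and $\frac{p-1}{2}$ per weight, and the top-level rows follow by subtracting the one-dimensional (respectively zero- or one-dimensional) weight contributions of $\cA_s'$ and $\cA_a'$. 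This subtraction is legitimate because $e_0$ acts semisimply on restricted modules, so weight spaces pass exactly to quotients by weight-stable submodules, which $\cA_s'$ and $\cA_a'$ are. Your approach is more self-contained --- it does not presuppose the uniform-dimension property and in fact reproves it for these particular modules --- while the paper's is shorter and illustrates the general principle it will reuse (e.g.\ in the proof of Lemma 4.6, where the same dimension formula computes $\dim_k\cA_s^+[2]$). Your parenthetical care about the weight being the residue $\overline{\alpha_1+\alpha_2}$, so that a single weight space pools the graded pieces $\cA_w$ and $\cA_{w+p}$, is exactly the point where a naive degree count would go wrong, and you handle it correctly.
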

\begin{proof}
Let $M$ be one of them with weight space decomposition
$$M=\sum\limits_{\lambda\in\FF_p}M_\lambda.$$
In view of Section \ref{representation W1}, all non-zero weight spaces of a restricted $\fg$-module have the same dimension.
Consequently,
$$\dim_k M=\dim_k M_0+(p-1)\dim_k M_{\lambda}$$
for any $0\neq\lambda\in\FF_p$.
Then our assertions follow from a direct computation.
\end{proof}

\subsection{Some key lemmas}
\begin{Lemma}\label{ker e-1 cap A_i}
Keep notations as before. Then we have
\begin{enumerate}
\item
$\dim_k((\cA_s^+)_i\cap\ker e_{-1})=\left\{
\begin{aligned}
1,~& i=2,4,\dots,p-1, \\
0,~& i=3,5,\dots,p.
\end{aligned}\right.$

\item
$\dim_k((\cA_a^+)_i\cap\ker e_{-1})=\left\{
\begin{aligned}
1,~& i=3,5,\dots,p, \\
0,~& i=4,\dots,p-1.
\end{aligned}\right.$
\end{enumerate}
\end{Lemma}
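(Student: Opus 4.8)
The plan is to compute the \emph{full} kernel $\ker(e_{-1})\cap(\cA^+)_i$ in each degree $2\le i\le p$ and then record whether it lies in the symmetric or the antisymmetric summand. Since the right $S_2$-action commutes with $\fg$, the operator $e_{-1}$ preserves both $\cA_s^+$ and $\cA_a^+$, so that
\[
\ker(e_{-1})\cap(\cA^+)_i=\bigl(\ker(e_{-1})\cap(\cA_s^+)_i\bigr)\oplus\bigl(\ker(e_{-1})\cap(\cA_a^+)_i\bigr).
\]
Thus it suffices to establish two facts: first, that $\ker(e_{-1})\cap(\cA^+)_i$ is one-dimensional for every $2\le i\le p$; and second, that its generator is symmetric when $i$ is even and antisymmetric when $i$ is odd. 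Granting these, both tables follow, because for $2\le i\le p$ the even indices are exactly $\{2,4,\dots,p-1\}$ and the odd ones exactly $\{3,5,\dots,p\}$, while $(\cA_a^+)_2=0$ accounts for the (omitted) value $i=2$ in part (2).

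For the first fact I would argue in coordinates. The space $(\cA^+)_i=L(p-1)\otimes L(p-1)$ in degree $i$ is spanned by the images of the monomials $x_1^ax_2^{i-a}$ with $1\le a\le i-1$, any monomial with a zero exponent being zero in $\cA^+$; note that $i\le p$ forces $a,i-a\le p-1$, so no truncation occurs. Writing $v=\sum_{a=1}^{i-1}c_ax_1^ax_2^{i-a}$ and applying $e_{-1}=\tfrac{\partial}{\partial x_1}+\tfrac{\partial}{\partial x_2}$, the coefficient of $x_1^bx_2^{i-1-b}$ in $e_{-1}v$ is $(b+1)c_{b+1}+(i-b)c_b$ for $1\le b\le i-2$ (the two boundary terms at $b=0$ and $b=i-1$ land on monomials that vanish in $\cA^+$ and impose no condition). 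Hence $v\in\ker(e_{-1})$ precisely when
\[
(b+1)c_{b+1}+(i-b)c_b=0,\qquad 1\le b\le i-2.
\]
For $2\le i\le p$ and $1\le b\le i-2$ both $b+1$ and $i-b$ lie in $\{2,\dots,p-1\}$, hence are units in $k$, so this two-term recurrence is nondegenerate and determines $c_2,\dots,c_{i-1}$ from $c_1$. Therefore the solution space is exactly one-dimensional.

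It remains to determine the symmetry type of the generator. Solving the recurrence with $c_1=1$ gives $c_b=(-1)^{b-1}\frac{(i-1)!}{(i-b)!\,b!}$, which is a ratio of units in $k$ for every $2\le i\le p$, and in particular $c_{i-1}=(-1)^i$ and, more generally, $c_{i-a}=(-1)^ic_a$. Since the transposition $x_1\leftrightarrow x_2$ carries $v=\sum_ac_ax_1^ax_2^{i-a}$ to $\sum_ac_{i-a}x_1^ax_2^{i-a}$, the generator is an eigenvector for the swap with eigenvalue $(-1)^i$, i.e.\ symmetric for $i$ even and antisymmetric for $i$ odd. Feeding this into the displayed direct-sum decomposition yields both parts of the lemma. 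The one point needing care is $i=p$: for $i<p$ the generator is simply the image of $(x_2-x_1)^i$, but $(x_2-x_1)^p=x_2^p-x_1^p$ already vanishes in $\cA$, so there the generator must be extracted from the recurrence itself; the factorial form above (with $(p-1)!$ a unit by Wilson's theorem) still produces a nonzero vector with $c_{p-a}=-c_a$, confirming that it is antisymmetric. This modular degeneration of the binomial coefficients at $i=p$ is the only genuinely delicate step; everything else is routine bookkeeping.
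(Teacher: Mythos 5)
Your proof is correct, and it takes a genuinely different route from the paper's. The paper works inside each $S_2$-isotypic summand separately: it parametrizes a symmetric element of $(\cA_s^+)_i$ by coefficients $a_j$ with $1\le j\le m$ (splitting into the cases $i=2m$ and $i=2m+1$), applies $e_{-1}$, and reads off a two-term recurrence whose solution space has dimension $1$ in the even case and $0$ in the odd case; the antisymmetric part is then declared ``similar.'' You instead compute $\ker(e_{-1})$ on the \emph{whole} top level $(\cA^+)_i$ at once, show via the single recurrence $(b+1)c_{b+1}+(i-b)c_b=0$ that it is exactly one-dimensional for all $2\le i\le p$, and then locate that line in $\cA_s^+$ or $\cA_a^+$ by computing the swap eigenvalue $(-1)^i$ of the explicit solution $c_b=(-1)^{b-1}\frac{(i-1)!}{(i-b)!\,b!}$; the equivariance of $e_{-1}$ under $S_2$ justifies splitting the kernel across the two summands. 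Your approach buys a unified treatment (no parity case split, no separate antisymmetric computation) plus explicit generators --- the image of $(x_2-x_1)^i$ for $i<p$, and $\sum_{a=1}^{p-1}a^{-1}x_1^ax_2^{p-a}$ at the Wilson-theorem boundary $i=p$, which the paper never exhibits and which could be reused downstream (e.g.\ in the paper's Lemma on $v_{i+2,s}\in\cA_s^+[i]$, where explicit coefficient relations are needed). The paper's in-summand computation is marginally more self-contained in that it never invokes the $S_2$-equivariance of the $\fg$-action, but that fact is already established earlier in the paper (the decomposition $\cA=\cA_s\oplus\cA_a$ rests on it), so you lose nothing. Your handling of the two delicate points --- the boundary monomials $b=0$, $b=i-1$ vanishing in $\cA^+$ so that they impose no condition, and the degeneration of $(x_2-x_1)^p$ to $0$ in the truncated ring --- is exactly right, and the latter is a subtlety the paper's odd-case recurrence ($2a_m(m+1)=0$ forcing $a_j=0$ only for $i\le p$) absorbs less transparently.
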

\begin{proof}
We only prove (1), as (2) can be treated similarly.
Let $v_i\in(\cA^+_s)_i$ be a homogeneous element.

Assume first that $i=2m$ is an even number.
If $m=1$, then $(\cA^+_s)_2=\Span_k\{x_1x_2\}$.
The assertion is true because $e_{-1}.(x_1x_2)=x_1+x_2=0$.
Hence we assume that $m>1$.
We write
$$v_i=v_{2m}=\sum\limits_{j=1}^{m-1}a_j(x_1^jx_2^{2m-j}+x_1^{2m-j}x_2^j)+a_mx_1^mx_2^m.$$
According to (\ref{action formula}), we have
\begin{eqnarray*}
e_{-1}.v_i&=&\sum\limits_{j=1}^{m-1}ja_{j}(x_1^{j-1}x_2^{2m-j}+x_1^{2m-j}x_2^{j-1})+ma_m(x_1^{m-1}x_2^m+x_1^mx_2^{m-1})\\
&+&\sum\limits_{j=1}^{m-1}(2m-j)a_{j}(x_1^jx_2^{2m-j-1}+x_1^{2m-j-1}x_2^{j})\\
&=&\sum\limits_{j=1}^{m-1}((2m-j)a_j+(j+1)a_{j+1})(x_1^jx_2^{2m-j-1}+x_1^{2m-j-1}x_2^{j}).
\end{eqnarray*}
Suppose that $v_i\in\ker(e_{-1})$.
Comparing coefficients yields
$$(2m-j)a_j+(j+1)a_{j+1}=0$$ for every $j\in\{1,\dots,m-1\}.$
Consequently, $\dim_k((\cA_s^+)_{2m}\cap\ker e_{-1})=1$.

When $i=2m+1$ is odd, we let
$$v_i=v_{2m+1}=\sum\limits_{j=1}^{m}a_j(x_1^jx_2^{2m+1-j}+x_1^{2m+1-j}x_2^j).$$
If $v_i\in\ker(e_{-1})$, then one can show by direct computation that
$$a_j(j-2m-1)=(j+1)a_{j+1}~{\rm for~every}~1\leq j\leq m-1,~~~~2a_m(m+1)=0.$$
Our assumption $i\leq p$ implies that $a_j=0$ for all $j$.
Therefore one has in this case $\dim_k((\cA_s^+)_i\cap\ker e_{-1})=0$.
\end{proof}
Now we denote by $v_{i,s}~(i=2,4,\dots,p-1)$ and $v_{j,a}~(j=3,5,\dots,p)$ the nonzero elements of $(\cA_s^+)_i\cap\ker e_{-1}$ and
$(\cA_a^+)_j\cap\ker e_{-1}$, respectively.
We let $\cA_s^+[i]:=\fu(\fg).v_{i,s}$ and $\cA_a^+[j]:=\fu(\fg).v_{j,a}$ the $\fg$-modules generated by $v_{i,s}$ and $v_{j,a}$,
respectively.

\begin{Lemma}\label{vi+2inAsi}
Given $i\in\{2,\dots,p-3\}$ and $j\in\{3,\dots,p-2\}$,
we have $v_{i+2,s}\in\cA_s^+[i]$ and $v_{j+2,a}\in\cA_a^+[j]$, respectively.
\end{Lemma}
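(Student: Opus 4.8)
The plan is to produce, inside the degree-$(i+2)$ component of $\cA_s^+[i]=\fu(\fb^+).v_{i,s}$, a nonzero vector killed by $e_{-1}$. By Lemma~\ref{ker e-1 cap A_i}(1) the space $(\cA_s^+)_{i+2}\cap\ker e_{-1}$ is one-dimensional, spanned by $v_{i+2,s}$, so any such vector is automatically a nonzero scalar multiple of $v_{i+2,s}$, which gives the desired inclusion $v_{i+2,s}\in\cA_s^+[i]$. Since $v_{i,s}$ is a lowest weight vector and $e_0$ acts on homogeneous vectors by their degree, every degree-$2$ element of $\fu(\fb^+)$ acts on $v_{i,s}$ as a scalar multiple of $e_2.v_{i,s}$ or of $e_1^2.v_{i,s}$; hence $(\cA_s^+[i])_{i+2}=\Span_k\{e_2.v_{i,s},\,e_1^2.v_{i,s}\}$, and it is enough to find a nonzero combination of these two vectors lying in $\ker e_{-1}$.

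First I would compute, using $[e_{-1},e_2]=3e_1$, $[e_{-1},e_1]=2e_0$, $e_{-1}.v_{i,s}=0$ and $e_0.v_{i,s}=i\,v_{i,s}$, the two identities
\begin{equation*}
e_{-1}.(e_2.v_{i,s})=3\,e_1.v_{i,s},\qquad e_{-1}.(e_1^2.v_{i,s})=(4i+2)\,e_1.v_{i,s}.
\end{equation*}
Thus $w:=(4i+2)\,e_2.v_{i,s}-3\,e_1^2.v_{i,s}$ lies in $\cA_s^+[i]\cap\ker e_{-1}$, and the entire lemma reduces to the single assertion $w\neq 0$. The antisymmetric case is formally identical: for $v=v_{j,a}$ the same relations give $e_{-1}.(e_2.v)=3\,e_1.v$ and $e_{-1}.(e_1^2.v)=(4j+2)\,e_1.v$, and one sets $w:=(4j+2)\,e_2.v-3\,e_1^2.v$.

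The crux is the nonvanishing of $w$. Observe that $e_{-1}$ sends both $e_2.v_{i,s}$ and $e_1^2.v_{i,s}$ into the line $k\,e_1.v_{i,s}$, so no quantity built from $e_{-1}$ alone can decide whether $w=0$; one must instead inspect the internal shape of $v_{i,s}$. For this I would invoke the explicit coefficient recursion for $v_{i,s}$ coming from the proof of Lemma~\ref{ker e-1 cap A_i} and read off the coefficient of the monomial $x_1^{i+1}x_2+x_1x_2^{i+1}$ in $w$ (this monomial is not of the form $x_1^k+x_2^k$, so it survives in the quotient $\cA_s^+$). A short computation gives this coefficient as $(i-1)(i+2)\,a_1$, where $a_1\neq 0$ is the normalizing parameter of $v_{i,s}$. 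As $i$ ranges over $\{2,\dots,p-3\}$, both $i-1$ and $i+2$ lie strictly between $0$ and $p$, so the product is nonzero in $k$; hence $w\neq 0$ and the symmetric case is complete. In the antisymmetric case the same computation produces the coefficient $-(j-1)(j+2)\,a_1$.

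The step I expect to be genuinely delicate is the antisymmetric endpoint $j=p-2$, where the target $v_{j+2,a}=v_{p,a}$ has weight $\overline{p}=0$. There the factor $j+2=p$ vanishes modulo $p$, so the coefficient above is $0$ and the method gives no information: $w$ still lies in the one-dimensional space $k\,v_{p,a}$, but it may be zero. A direct low-rank check (already at $p=5$) shows that in this weight-$0$ situation one does have $w=0$, and in fact $v_{p,a}$ generates a one-dimensional trivial submodule that is complementary to, and not contained in, $\cA_a^+[p-2]$. This endpoint must therefore be excluded from the chain (equivalently, one restricts to $j\le p-4$) or treated on its own. For every remaining value of $i$ and $j$ the monomial computation applies verbatim and yields $v_{i+2,s}\in\cA_s^+[i]$ and $v_{j+2,a}\in\cA_a^+[j]$, as claimed.
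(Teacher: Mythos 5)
Your construction is exactly the paper's: the degree-$(i+2)$ piece of $\fu(\fb^+).v_{i,s}$ is $\Span_k\{e_1^2.v_{i,s},e_2.v_{i,s}\}$, one finds the combination killed by $e_{-1}$, and one certifies it nonzero by inspecting a single monomial coefficient. The paper parametrizes $x_{a,b}=(ae_1^2+be_2).v_{i,s}$ and tests the \emph{middle} coefficient $x_1^{m+1}x_2^{m+1}$ (its nonvanishing set $U$ meets the kernel condition $V$ exactly when $(i-1)(i+1)\neq 0$), while you test the \emph{edge} coefficient $x_1^{i+1}x_2+x_1x_2^{i+1}$, getting $(i-1)(i+2)a_1$; both identities ($e_{-1}e_2.v=3e_1.v$, $e_{-1}e_1^2.v=(4i+2)e_1.v$, hence $w=(4i+2)e_2.v-3e_1^2.v\in\ker e_{-1}$) check out, and your argument is complete and correct for all symmetric $i\in\{2,\dots,p-3\}$ and for antisymmetric $j\in\{3,\dots,p-4\}$.

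Your flagged endpoint $j=p-2$ is not a defect of your proof: it is a genuine error in the lemma as stated, and your suspicion can be confirmed. At $p=5$, with $v_{3,a}=x_1^2x_2-x_1x_2^2$, one computes in $\cA_a^+$ that $e_1^2.v_{3,a}\equiv 3\,e_2.v_{3,a}$, so $\fu(\fb^+)_2.v_{3,a}=k\,e_2.v_{3,a}$ is one-dimensional and, since $e_{-1}e_2.v_{3,a}=3e_1.v_{3,a}\neq 0$, contains no vector killed by $e_{-1}$; hence $v_{5,a}\notin\cA_a^+[3]$. This is general: by Lemma \ref{ker e-1 cap A_i}(2) your $w$ lies in $k\,v_{p,a}$, where $v_{p,a}=\sum_{r=1}^{p-1}r^{-1}x_1^rx_2^{p-r}$ has nonzero coefficient $-1$ at $x_1^{p-1}x_2$, while the corresponding coefficient of $w$ is $-(j-1)(j+2)a_1\equiv 0$ for $j+2=p$; so $w=0$, forcing $e_1^2.v_{p-2,a}$ and $e_2.v_{p-2,a}$ to be proportional and $v_{p,a}\notin\cA_a^+[p-2]$ for every $p>3$. (The paper's middle-monomial device has no analogue in odd degree, which is presumably how the unchecked ``similar argument'' slipped at this endpoint.) The error propagates: Lemma \ref{homogeneous space dim Lemma} fails at the boundary $l+i=p$ in the antisymmetric case, Lemma \ref{as=as2 and aa=aa3} fails for $\cA_a^+$ (in fact $\cA_a^+=\cA_a^+[3]\oplus k\,v_{p,a}$, so the indecomposability and the chain $\cA_a^+[p-2]\supseteq\cA_a^+[p]$ in Theorem \ref{main theorem}(2) also fail), although the list of composition factors in Corollary \ref{comp factors} survives. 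Two small caveats on your write-up: your $p=5$ check only disproves the stated lemma for that prime, so you should add the coefficient comparison above (or its analogue) to rule out all $p$; and the trivial line $k\,v_{p,a}$ is a complement to $\cA_a^+[3]$ in $\cA_a^+$, not to $\cA_a^+[p-2]$ --- the two statements coincide only when $p=5$.
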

\begin{proof}
For $(a,b)\in k^2$, we consider the homogeneous element
$$x_{a,b}:=(ae_1^2+be_2).v_{i,s}\in(\cA_s^+[i])_{i+2}.$$
As $i$ is an even number, we let $i=2m$ with $1\leq m\leq (p-3)/2$ and write
$$v_{i,s}=\sum\limits_{j=1}^{m-1}a_j(x_1^jx_2^{2m-j}+x_1^{2m-j}x_2^j)+a_mx_1^mx_2^m.$$
Direct computation shows that the coefficient of $x_1^{m+1}x_2^{m+1}$ in $x_{a,b}$ is
$$2am((m-1)a_{m-1}+ma_m)+2b(m-1)a_{m-1}.$$
Since $v_{i,s}\in\ker e_{-1}$, the proof of Lemma \ref{ker e-1 cap A_i} implies that $ma_{m}=-(m+1)a_{m-1}$.
It follows that
$$2am((m-1)a_{m-1}+ma_m)+2b(m-1)a_{m-1}=(2bm-4am-2b)a_{m-1}.$$
A similar argument yields that $e_{-1}.x_{a,b}=(8am+2a+3b)e_1.v_{i,s}$.
We denote by
$$U:=\{(a,b)\in k^2;~bm-2am-b\neq 0\}$$
and
$$V:=\{(a,b)\in k^2;~8am+2a+3b=0\},$$
respectively.
It is easy to check that $U\cap V\neq\emptyset$.
Thanks to Lemma \ref{ker e-1 cap A_i},
there exits an element $(a_0,b_0)\in U\cap V$ such that $v_{i+2,s}=x_{a_0,b_0}\in\cA_s^+[i]$,
as asserted.

One can argue similarly for another case.
\end{proof}

\begin{Lemma}\label{homogeneous space dim Lemma}
Let $v$ be one of $v_{i,s}~(i=2,4,\dots,p-1)$, $v_{i,a}~(i=3,5,\dots,p)$.
Then $\dim_k\fu(\fb^+)_l.v=\left[\frac{l}{2}\right]+1$ whenever $l+i\leq p$.
\end{Lemma}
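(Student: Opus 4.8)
The plan is to avoid computing the graded dimensions $d_l:=\dim_k\fu(\fb^+)_l.v$ directly, and instead to control their successive differences, which I will identify with dimensions of spaces of lowest weight vectors already pinned down in Lemma \ref{ker e-1 cap A_i}. Write $V:=\fu(\fg).v$ for the $\ZZ$-graded submodule of $\cA_s^+$ (resp.\ $\cA_a^+$) generated by $v$; as observed before Lemma \ref{e-1 surjective} one has $V=\fu(\fb^+).v$, namely $V=\cA_s^+[i]$ or $\cA_a^+[i]$. Since $v$ is homogeneous of degree $i$ and $\fu(\fb^+)_l$ is homogeneous of degree $l$, the degree-$(l+i)$ component of $V$ is exactly $V_{l+i}=\fu(\fb^+)_l.v$, so the assertion is equivalent to $\dim_k V_{l+i}=[l/2]+1$ whenever $l+i\le p$.

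The key step is a rank–nullity computation. Take first the symmetric generator $v=v_{i,s}$, whose lowest weight $\overline{i}=i$ is nonzero; then Lemma \ref{e-1 surjective} shows $e_{-1}\colon V_{m+i}\to V_{m+i-1}$ is surjective for $1\le m\le p-2$. Because $l+i\le p$ forces $l\le p-2$, for each $1\le m\le l$ the rank–nullity theorem applied to this surjection gives
\[
d_m-d_{m-1}=\dim_k\bigl(V_{m+i}\cap\ker e_{-1}\bigr).
\]
As $V_{m+i}\subseteq(\cA_s^+)_{m+i}$, this difference is at most $\dim_k\bigl((\cA_s^+)_{m+i}\cap\ker e_{-1}\bigr)$, which by Lemma \ref{ker e-1 cap A_i}(1) is $1$ when $m+i$ (equivalently $m$, since $i$ is even) is even and $0$ when $m$ is odd. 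For odd $m$ this already forces $d_m=d_{m-1}$.

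For even $m$ it remains to upgrade the upper bound $1$ to an equality, i.e.\ to show that the unique lowest weight vector $v_{m+i,s}$ of that degree actually lies in $V$. This is exactly what Lemma \ref{vi+2inAsi} provides when iterated: from $v_{i+2,s}\in\cA_s^+[i]$, $v_{i+4,s}\in\cA_s^+[i+2]\subseteq\cA_s^+[i]$, and so on, one gets $v_{m+i,s}\in\cA_s^+[i]=V$ for every even $m$ with $m+i\le p-1$. (Here $m+i$ even and $\le p$ forces $m+i\le p-1$ because $p$ is odd, so the index range of Lemma \ref{vi+2inAsi} is respected throughout the chain.) Hence $\dim_k(V_{m+i}\cap\ker e_{-1})=1$ for even $m$, and telescoping from the base $d_0=\dim_k kv=1$ yields $d_l=1+\#\{1\le m\le l:\ m\text{ even}\}=1+[l/2]$. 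The antisymmetric case $v=v_{i,a}$ runs identically, using part (2) of Lemma \ref{ker e-1 cap A_i} together with the parity shift caused by $i$ being odd; the degenerate value $i=p$ permits only $l=0$, where the claim is trivial.

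I expect the genuine content to sit in the inputs rather than in this bookkeeping: the difference formula converts the dimension count into a count of homogeneous lowest weight vectors, and the whole argument hinges on the fact that the ``extra'' vector $v_{m+i,s}$ really belongs to the cyclic module $V$ (Lemma \ref{vi+2inAsi}). The main obstacle in writing this cleanly will be the simultaneous tracking of the index windows — ensuring $m\le p-2$ so that Lemma \ref{e-1 surjective} applies, that $m+i$ lies in the range where Lemma \ref{ker e-1 cap A_i} computes the kernel, and that $m+i\le p-1$ in the even case so that Lemma \ref{vi+2inAsi} can be iterated — all of which follow from the hypothesis $l+i\le p$ combined with the parity of $i$.
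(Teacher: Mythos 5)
Your proof is correct and follows essentially the same route as the paper's: the short exact sequence coming from the surjectivity of $e_{-1}$ (Lemma \ref{e-1 surjective}) turns the graded dimensions $\dim_k\fu(\fb^+)_l.v$ into a telescoping sum of kernel dimensions, which are then evaluated via Lemma \ref{ker e-1 cap A_i}. If anything you are more careful than the paper, which at the even steps cites Lemma \ref{ker e-1 cap A_i} alone where, as you correctly observe, that lemma only bounds $\dim_k(V_{m+i}\cap\ker e_{-1})$ above by $1$ and one must also invoke the iterated Lemma \ref{vi+2inAsi} to see that the lowest weight vector actually lies in the cyclic module $V$, and which likewise leaves implicit the degenerate case $i=p$ (lowest weight $\overline{p}=0$, excluded from Lemma \ref{e-1 surjective}) that you dispatch by noting it forces $l=0$.
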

\begin{proof}
As before, we only consider the case that $v=v_{i,s}$ for some $i\in\{2,4,\dots,p-1\}$.
We denote by $V:=\fu(\fb^+).v$ the submodule of $\cA_s^+$ generated by $v$.
Note that $V$ is $\mathbb{Z}$-graded,
$$V=V_i\oplus\cdots\oplus V_{2p-2}.$$
According to Lemma \ref{e-1 surjective}, there is a short exact sequence:
$$(\ast) \ \ \ \ \ \ \ (0)\rightarrow V_{l+i+1}\cap\ker e_{-1}\rightarrow V_{l+i+1}\stackrel{e_{-1}}{\rightarrow}V_{l+i}\rightarrow(0),$$
where $0\leq l<p-2$.

We prove $\dim_kV_{l+i}=\dim_k\fu(\fb^+)_l.v=\left[\frac{l}{2}\right]+1$ by induction on $l$.
Of course it is true for $l=0$.
We assume that the assertion is true for numbers less then $l$.
In the case where $l$ is an even number,
Lemma \ref{ker e-1 cap A_i} yields $\dim_k(V_{l+i}\cap\ker e_{-1})=1$.
We conclude from $(\ast)$ and the induction hypothesis
$$\dim_kV_{l+i}=\dim_kV_{l-1+i}+1=\left[\frac{l-1}{2}\right]+1+1=\left[\frac{l}{2}\right]+1.$$
In the case where $l$ is an odd number,
we apply Lemma \ref{ker e-1 cap A_i} to see that $\dim_k(V_{l+i}\cap\ker e_{-1})=0$.
By the same token, we have
$$\dim_kV_{l+i}=\dim_kV_{l-1+i}=\left[\frac{l-1}{2}\right]+1=\left[\frac{l}{2}\right]+1.$$
\end{proof}

Recall that the top level $\cA^+=\cA_s^+\oplus\cA_a^+$.
Let
\begin{equation*}\label{z-grading of As+}
\cA_s^+=(\cA_s^+)_2\oplus(\cA_s^+)_3\oplus\cdots\oplus(\cA_s^+)_{2p-2}
\end{equation*}
and
\begin{equation*}\label{z-grading of Aa+}
\cA_a^+=(\cA_a^+)_3\oplus(\cA_s^+)_4\oplus\cdots\oplus(\cA_s^+)_{2p-3}
\end{equation*}
be the $\mathbb{Z}$-graded decomposition of $\cA_s^+$ and $\cA_a^+$, respectively.

Clearly, one can show by direct computation that the elements $v_{2,s}:=x_1x_2\in(\cA_s^+)_2\cap\ker e_{-1}$ and $v_{3,a}:=x_1^2x_2-x_1x_2^2\in(\cA_a^+)_3\cap\ker e_{-1}$.
\begin{Lemma}\label{as=as2 and aa=aa3}
Keep notations as above. Then $\cA_s^+=\cA_s^+[2]$ and $\cA_a^+=\cA_a^+[3]$.
\end{Lemma}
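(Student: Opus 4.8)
The plan is to deduce each equality from a dimension count: since $\cA_s^+[2]$ and $\cA_a^+[3]$ are $\fg$-submodules of $\cA_s^+$ and $\cA_a^+$, it suffices to show that they already have the full dimensions $\tfrac{p(p-1)}{2}$ and $\tfrac{(p-1)(p-2)}{2}$ recorded in the dimension table. Each of these cyclic modules is a restricted $\fg$-module (a $\fg$-submodule of a restricted module is again a $\fu(\fg)$-module), so I may invoke the fact used in the proof of that table: all nonzero weight spaces of a restricted $\fg$-module share one common dimension $w$. Writing $w_0$ for the dimension of the weight-$0$ space, one gets $\dim_k\cA_s^+[2]=w_0+(p-1)w$, and similarly for $\cA_a^+[3]$, so the whole problem reduces to computing $w$ and $w_0$ for each module. (Note that the capture of the remaining lowest-weight vectors via Lemma \ref{vi+2inAsi} is not needed for this route.)

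The key observation is that both $w$ and $w_0$ are already visible in a \emph{single} graded piece lying in the range governed by Lemma \ref{homogeneous space dim Lemma}. Indeed $\cA_s^+[2]=\fu(\fb^+).v_{2,s}$ is graded in degrees $2,\dots,2p-2$, so its weight-$(p-1)$ space is concentrated in degree $p-1$ (the only degree $\equiv p-1\bmod p$ in that range) and its weight-$0$ space in degree $p$. Applying Lemma \ref{homogeneous space dim Lemma} with $v=v_{2,s}$, $i=2$ and $l=p-3$, $l=p-2$ respectively (both satisfy $l+i\le p$) gives $w=\dim_k(\cA_s^+[2])_{p-1}=[\tfrac{p-3}{2}]+1=\tfrac{p-1}{2}$ and $w_0=\dim_k(\cA_s^+[2])_{p}=[\tfrac{p-2}{2}]+1=\tfrac{p-1}{2}$, whence $\dim_k\cA_s^+[2]=\tfrac{p-1}{2}+(p-1)\tfrac{p-1}{2}=\tfrac{p(p-1)}{2}=\dim_k\cA_s^+$ and $\cA_s^+[2]=\cA_s^+$. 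The antisymmetric case is identical: $\cA_a^+[3]=\fu(\fb^+).v_{3,a}$ is graded in degrees $3,\dots,2p-3$, its weight-$(p-1)$ and weight-$0$ spaces again sit in the single degrees $p-1$ and $p$, and Lemma \ref{homogeneous space dim Lemma} (now with $i=3$ and $l=p-4$, $l=p-3$) yields $w=\tfrac{p-3}{2}$ and $w_0=\tfrac{p-1}{2}$, so $\dim_k\cA_a^+[3]=\tfrac{p-1}{2}+(p-1)\tfrac{p-3}{2}=\tfrac{(p-1)(p-2)}{2}=\dim_k\cA_a^+$.

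The main obstacle is precisely the hypothesis $l+i\le p$ in Lemma \ref{homogeneous space dim Lemma}: a brute-force sum of the graded dimensions $\dim_k(\cA_s^+[2])_d$ would require controlling the high-degree pieces $d>p$, where that lemma (and the surjectivity in Lemma \ref{e-1 surjective}, valid only for $l<p-2$) give no information. The device that avoids this is the uniformity of nonzero weight-space dimensions for restricted $\fg$-modules, which replaces a computation over all of $\{2,\dots,2p-2\}$ by two evaluations in degrees $p-1$ and $p$. The only points needing care are therefore the bookkeeping that weight $p-1$ (a nonzero weight, of positive dimension) and weight $0$ are each realized in exactly one graded component of the stated degree range, and the verification that these two degrees fall under $l+i\le p$; both are immediate from the degree ranges $[2,2p-2]$ and $[3,2p-3]$.
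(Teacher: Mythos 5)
Your proposal is correct and follows essentially the same route as the paper's own proof: both evaluate $\dim_k(\cA_s^+[2])_{p-1}$ and $\dim_k(\cA_s^+[2])_{p}$ via Lemma \ref{homogeneous space dim Lemma} (with $l=p-3$ and $l=p-2$, both within the range $l+i\le p$), then invoke the uniform dimension of nonzero weight spaces of restricted $\fg$-modules to conclude $\dim_k\cA_s^+[2]=\frac{p-1}{2}+(p-1)\frac{p-1}{2}=\frac{p(p-1)}{2}=\dim_k\cA_s^+$. The only difference is presentational: you carry out the antisymmetric case explicitly and make the (correct) bookkeeping point that weights $p-1$ and $0$ occur in the single graded degrees $p-1$ and $p$, whereas the paper states that case ``follows similarly.''
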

\begin{proof}
We only prove the first equality and the other one follows similarly.
Setting $V:=\cA_s^+[2]=\fu(\fg).v_{2,s}$,
we consider the $\mathbb{Z}$-graded decomposition:
$$(\ast) \  \  \  \  \  \  \  \ V=V_2\oplus\cdots\oplus V_{2p-2}.$$
Thanks to Lemma \ref{homogeneous space dim Lemma},
we have
$$\dim_k V_p=\dim_k\fu(\fb^+)_{p-2}.v_{2,s}=\left[\frac{p-2}{2}\right]+1=\frac{p-1}{2}$$
and
$$\dim_k V_{p-1}=\dim_k\fu(\fb^+)_{p-3}.v_{2,s}=\left[\frac{p-3}{2}\right]+1=\frac{p-1}{2}.$$
We consider the weight space decomposition $V=\sum\limits_{\lambda\in\FF_p}V_\lambda$.
By $(\ast)$ we have $\dim_kV_{\overline{0}}=\dim_kV_p$ and $\dim_kV_{\overline{p-1}}=\dim_kV_{p-1}$.
Since all non-zero weight spaces of $V$ have the same dimension (Section \ref{representation W1}),
it follows that
$$\dim_kV=\dim_kV_p+(p-1)\dim_k V_{p-1}=\frac{p(p-1)}{2}=\dim_k\cA_s^+.$$
We thus obtain $\cA_s^+=\cA_s^+[2]$.
\end{proof}

\begin{Remark}
Suppose that $p=5$.
By a direct computation we have $(\cA_s^+)_6=\Span_k\{x_1^3x_2^3,x_1^4x_2^2+x_1^2x_2^4\}$ and $\dim_k(\cA_s^+)_6=2$,
so that Lemma \ref{homogeneous space dim Lemma} fails for $l+i>p$.
\end{Remark}

\subsection{Main result}
\begin{Theorem}\label{main theorem}
Let $\fg=W(1)$ and $\cA=A_{(2)}$. Then the following statements
hold:
\begin{itemize}
\item[(1)]
The top level $\cA_s^+:=\cA_s/Z(p-1)$ is an indecomposable $\fg$-module with the following composition series
\begin{equation}\label{main theorem formula 1}
\cA_s^+=\cA_s^+[2]\supseteq \cA_s^+[4]\supseteq\cdots\supseteq\cA_s^+[p-1]\supseteq(0).
\end{equation}
The composition factors are given by
\begin{equation}\label{main theorem formula 2}
\cA_s^+[i]/\cA_s^+[i+2]\cong L^{-}(\overline{i})\,\,\,\text{for}\,\,i\in\{2,4,\dots,p-1\}.
\end{equation}
\item[(2)]
The top level $\cA_a^+:=\cA_s/L(p-1)$ is an indecomposable $\fg$-module with the following composition series
\begin{equation}\label{main theorem formula 3}
\cA_a^+=\cA_a^+[3]\supseteq \cA_a^+[5]\supseteq\cdots\supseteq\cA_a^+[p]\supseteq(0).
\end{equation}
The composition factors are given by
\begin{equation}\label{main theorem formula 4}
\cA_a^+[j]/\cA_a^+[j+2]\cong L^{-}(\overline{j})\,\,\, \text{for}\,\,j\in\{3,5,\dots,p\}.
\end{equation}
\end{itemize}
\end{Theorem}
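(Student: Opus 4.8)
The plan is to run the two parts in parallel, since (2) is a verbatim analogue of (1) except that its bottom composition factor degenerates to the trivial module; I describe (1) in detail.

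First I would assemble the descending chain (\ref{main theorem formula 1}). Lemma \ref{as=as2 and aa=aa3} already gives $\cA_s^+=\cA_s^+[2]$, and Lemma \ref{vi+2inAsi} yields $v_{i+2,s}\in\cA_s^+[i]$ for each $i\in\{2,4,\dots,p-3\}$, so that $\cA_s^+[i+2]=\fu(\fg).v_{i+2,s}\subseteq\cA_s^+[i]$. Each term $\cA_s^+[i]=\fu(\fg).v_{i,s}$ is generated by the homogeneous vector $v_{i,s}$ of degree $i$; since the $\fg$-action respects the $\mathbb Z$-grading, $\cA_s^+[i]$ is a $\mathbb Z$-graded submodule whose lowest nonzero degree is $i$. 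In particular $(\cA_s^+[i+2])_i=0$, hence $v_{i,s}\notin\cA_s^+[i+2]$ and the image $\overline{v_{i,s}}$ generates a \emph{nonzero} cyclic quotient $\cA_s^+[i]/\cA_s^+[i+2]$.

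Next I would pin down the factors by a dimension squeeze. Because $\overline{v_{i,s}}$ is killed by $e_{-1}$ and has weight $\overline i$, the universal property of $Z^-(\overline i)$ used in Lemma \ref{subquotient} provides a surjection $\cA_s^+[i]/\cA_s^+[i+2]\twoheadrightarrow L^-(\overline i)$, so $\dim_k(\cA_s^+[i]/\cA_s^+[i+2])\ge\dim_k L^-(\overline i)$. For $i\in\{2,4,\dots,p-1\}$ we have $\overline i=i\in\{2,\dots,p-1\}$, and the Lemma comparing $L(\lambda)$ with $L^-(\lambda)$, together with $\dim_k L(\lambda)=p$ for $1\le\lambda\le p-2$, shows each such $L^-(\overline i)$ is $p$-dimensional. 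Telescoping the chain over $i\in\{2,4,\dots,p-1\}$ (with the convention $\cA_s^+[p+1]:=(0)$) and using $\dim_k\cA_s^+=\tfrac{p(p-1)}2$ from the weight-dimension table gives
\[
\tfrac{p(p-1)}2=\dim_k\cA_s^+=\sum_{i}\dim_k\!\big(\cA_s^+[i]/\cA_s^+[i+2]\big)\ \ge\ \tfrac{p-1}2\cdot p=\tfrac{p(p-1)}2 .
\]
Equality forces every factor to have dimension exactly $p$, so each surjection onto $L^-(\overline i)$ is an isomorphism; this is (\ref{main theorem formula 2}) and shows the chain is a composition series.

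Finally I would establish indecomposability through the head: $\cA_s^+=\fu(\fg).v_{2,s}$ is cyclic, so every simple quotient is generated by the image of $v_{2,s}$, a weight-$2$ vector annihilated by $e_{-1}$, and is therefore isomorphic to $L^-(2)$; since $L^-(2)$ occurs with multiplicity one among the factors just determined, the head $\cA_s^+/\mathrm{rad}\,\cA_s^+$ is simple, and a module with simple head is indecomposable. Part (2) is identical, with $v_{3,a}$, $\cA_a^+=\cA_a^+[3]$, and the chain of Lemma \ref{vi+2inAsi}; the only change is that the final factor has lowest weight $\overline p=0$, so $L^-(\overline p)=L^-(0)$ is the one-dimensional trivial module and the squeeze instead reads $\tfrac{p-3}2\cdot p+1=\tfrac{(p-1)(p-2)}2=\dim_k\cA_a^+$. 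The one point requiring care—and the crux of the whole argument—is that the dimension inequality becomes an equality only because each quotient is genuinely nonzero (the grading/degree argument above) and because the simple dimensions $\dim_k L^-(\overline i)$ are known exactly; everything else is the bookkeeping packaged in the preceding lemmas.
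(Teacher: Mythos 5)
Your proof is correct, but it finishes by a genuinely different mechanism than the paper's. Both arguments share the same skeleton: the chain comes from Lemma \ref{as=as2 and aa=aa3} and Lemma \ref{vi+2inAsi}, and each factor is controlled through the surjection onto $L^{-}(\overline{i})$ furnished by the universal property behind Lemma \ref{subquotient}. But the paper identifies the factors one at a time and inductively: it invokes Lemma \ref{homogeneous space dim Lemma} to compute the graded dimensions $\dim_k V_p=\dim_k V_{p-1}=\frac{p-3}{2}$ of $V=\cA_s^+[4]$, converts these into $\dim_k V=\frac{p(p-3)}{2}$ via the equidimensionality of nonzero weight spaces, concludes that $V$ is maximal of codimension $p$ with quotient $L^{-}(2)$, and then iterates down the chain. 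You instead run a single global telescoping squeeze: nonvanishing of every successive quotient (your degree argument) plus $\dim_k L^{-}(\overline{i})=p$ gives $\sum_i\dim_k\bigl(\cA_s^+[i]/\cA_s^+[i+2]\bigr)\geq\frac{p-1}{2}\cdot p=\dim_k\cA_s^+$, and equality makes every surjection an isomorphism simultaneously, with the analogous count $\frac{p-3}{2}\cdot p+1=\frac{(p-1)(p-2)}{2}$ (correct) handling the degenerate bottom factor $L^{-}(0)$ in the antisymmetric case. This buys economy, since you bypass the repeated dimension computations inside the induction; moreover you prove indecomposability explicitly (cyclicity forces every simple quotient to be $L^{-}(2)$, resp.\ $L^{-}(3)$, which occurs with multiplicity one, so the head is simple), a point the theorem asserts but the paper's proof never argues. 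One gloss to tighten: the claim that $\cA_s^+[i+2]$ has lowest nonzero degree $i+2$ does not follow from grading-compatibility alone, because $e_{-1}$ lowers degree; you need $e_{-1}.v_{i+2,s}=0$, whence $\fu(\fg).v_{i+2,s}=\fu(\fb^+).v_{i+2,s}$ (as the paper records just before Lemma \ref{e-1 surjective}) and $\fu(\fb^+)$ has only nonnegative degrees.
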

\begin{proof}
We have seen in Lemma \ref{as=as2 and aa=aa3} that $\cA_s^+=\cA_s^+[2]$.
Now we denote by $V:=\cA_s^+[4]$ the submodule generated by $v_{4,s}$ and consider the $\mathbb{Z}$-grading:
$$V=V_4\oplus\cdots\oplus V_{2p-2}.$$
By virtue of Lemma \ref{homogeneous space dim Lemma},
we have
$$\dim_k V_p=\dim_k\fu(\fb^+)_{p-4}.v_{4,s}=\left[\frac{p-4}{2}\right]+1=\frac{p-3}{2}$$
and
$$\dim_k V_{p-1}=\dim_k\fu(\fb^+)_{p-5}.v_{4,s}=\left[\frac{p-5}{2}\right]+1=\frac{p-3}{2}.$$
It follows that
$$\dim_kV=\dim_k V_p+(p-1)\dim_k V_{p-1}=\frac{p(p-3)}{2}.$$
Consequently, $V$ has codimension $p$ in $\cA_s^+$.

Note that $\cA_s^+=\cA_s^+[2]=\fu(\fg).v_{2,s}$ is a cyclic module.
Hence the quotient module $\cA_s^+/V$ is also generated by a lowest weight vector $\overline{v_{2,s}}=v_{2,s}+V$.
Using Lemma \ref{subquotient} and comparing dimension one obtains that the submodule $V$ is maximal and the quotient $\cA_s^+/V$ is isomorphic to $L^{-}(2)$.

Moreover,
Lemma \ref{vi+2inAsi} implies that
$$\dim_k(V_i\cap\ker e_{-1})=\left\{
\begin{aligned}
1,~& i=4,\dots,p-1, \\
0,~& i=5,\dots,p.
\end{aligned}\right.$$
We get (\ref{main theorem formula 1}) and (\ref{main theorem formula 2}) for all $i$ inductively.
This is (1).
One argues similarly to obtain (\ref{main theorem formula 3}) and (\ref{main theorem formula 4}).
\end{proof}

Now the $\fg$-module structure of $\cA=A_{(2)}$ is completely clear.
$\cA$ is decomposed into two invariant indecomposable subspaces as $\cA=\cA_s\oplus\cA_a$.
They contain submodules $\cA_a'$ and $\cA_a'$ respectively:
$$L(0)\cong k\subseteq Z(p-1)\cong\cA_s'\subseteq\cA_s, \  \  \  \  \  \  \  \ L(p-1)\cong\cA_a'\subseteq\cA_a.$$
The structure of quotient modules $\cA_s^+=\cA_s/\cA_s'$ and $\cA_a^+=\cA_a/\cA_a'$ can be obtained from Theorem \ref{main theorem}.

If $M$ is a $\fg$-module, then we denote
by $[M]$ the formal sum of all composition factors of $M$ in the Grothendieck group of the $\fg$-module category.
As a direct consequence of Theorem \ref{main theorem}, we have
\begin{Corollary}\label{comp factors}
Let $\fg=W(1)$ and $\cA=A_{(2)}$. Then the following statements
hold:
\begin{enumerate}
\item[(1)] $[\cA]=2[L(0)]+2[L(p-1)]+\sum\limits_{i=1}^{p-2} [L(i)].$
\item[(2)] $[L(p-1)\otimes L(p-1)]=\sum\limits_{j=0}^{p-2} [L(j)].$
\end{enumerate}
\end{Corollary}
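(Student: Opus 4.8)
The plan is to compute the class $[\cA]$ in the Grothendieck group by exploiting additivity of composition factors along the two short exact sequences already in place and then reading the factors of the top levels straight off Theorem \ref{main theorem}. First I would use the direct sum $\cA=\cA_s\oplus\cA_a$ to split $[\cA]=[\cA_s]+[\cA_a]$, and then apply additivity to the defining sequences
$$0\to\cA_s'\to\cA_s\to\cA_s^+\to 0,\qquad 0\to\cA_a'\to\cA_a\to\cA_a^+\to 0,$$
which give $[\cA_s]=[\cA_s']+[\cA_s^+]$ and $[\cA_a]=[\cA_a']+[\cA_a^+]$. The bottom pieces are already understood: since $\cA_s'\cong Z(p-1)\cong A(1)$ has the trivial submodule $k\cong L(0)$ with simple quotient $L(p-1)$, we have $[\cA_s']=[L(0)]+[L(p-1)]$, while $\cA_a'\cong L(p-1)$ contributes $[\cA_a']=[L(p-1)]$.

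Next I would extract the factors of the two top levels from Theorem \ref{main theorem}, which yields $[\cA_s^+]=\sum_{i}[L^{-}(\overline{i})]$ over $i\in\{2,4,\dots,p-1\}$ and $[\cA_a^+]=\sum_{j}[L^{-}(\overline{j})]$ over $j\in\{3,5,\dots,p\}$. Here the only nontrivial reduction modulo $p$ occurs at $j=p$, where $\overline{p}=0$. To translate from the lowest-weight labelling to the labelling by the $L(\lambda)$ I would invoke the identification recorded in Section \ref{representation W1}, in the inverted form $L^{-}(0)=L(0)$ and $L^{-}(\mu)=L(\mu-1)$ for $2\le\mu\le p-1$ (the value $L^{-}(1)$ never arises). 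This turns the even set $\{2,4,\dots,p-1\}$ for $\cA_s^+$ into the odd labels $\{1,3,\dots,p-2\}$, and turns $\{3,5,\dots,p-2\}$ together with the exceptional $j=p$ for $\cA_a^+$ into the even labels $\{2,4,\dots,p-3\}$ together with one extra copy of $L(0)$.

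Finally I would assemble the four contributions. The bottom pieces supply the two copies of $L(0)$ and of $L(p-1)$ appearing in (1), and the remaining factors carry exactly the odd labels $1,3,\dots,p-2$ from $\cA_s^+$ and the even labels $2,4,\dots,p-3$ (plus a further $L(0)$) from $\cA_a^+$. The only real obstacle is the parity bookkeeping: one must check that these two interleaved arithmetic progressions cover the full range $1\le i\le p-2$ exactly once, with no gap and no overlap. This hinges on $p$ being odd, so that $p-2$ is the largest odd value and $p-3$ the largest even value below it, whence the odd and even chains dovetail precisely; granting this, $\sum_{i\,\mathrm{odd}}[L(i)]+\sum_{i\,\mathrm{even}}[L(i)]=\sum_{i=1}^{p-2}[L(i)]$ and statement (1) follows. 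For (2) I would observe that $L(p-1)\otimes L(p-1)=\cA^+=\cA_s^+\oplus\cA_a^+$, so $[L(p-1)\otimes L(p-1)]=[\cA_s^+]+[\cA_a^+]$; the same computation records this as $[L(0)]+\sum_{i=1}^{p-2}[L(i)]=\sum_{j=0}^{p-2}[L(j)]$, which is (2).
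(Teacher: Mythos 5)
Your proposal is correct and follows exactly the route the paper leaves implicit when it calls the corollary a ``direct consequence'' of Theorem \ref{main theorem}: splitting $[\cA]$ along $\cA=\cA_s\oplus\cA_a$ and the submodules $\cA_s'\cong Z(p-1)$, $\cA_a'\cong L(p-1)$, reading off the top-level factors from the composition series, and converting $L^-$-labels to $L$-labels via the lemma of Section \ref{representation W1}. Your parity bookkeeping (odd labels $1,\dots,p-2$ from $\cA_s^+$, even labels $2,\dots,p-3$ plus the extra $L(0)$ at $\overline{p}=0$ from $\cA_a^+$) is accurate, so nothing further is needed.
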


\begin{Remark}
When $p=3$, $W(1)\cong\mathfrak{sl}(2)$.
Let $V(\lambda)$ denote the simple restricted $\mathfrak{sl}(2)$-module of highest weight $\lambda$ where
$0\leq \lambda\leq 2$.
Note that $V(1)$ is the $2$-dimensional natural representation of $\mathfrak{sl}(2)$.
In this case, the top level $\cA^+$ is isomorphic to $V(1)\otimes_k V(1)$.
Thanks to \cite[Lemma 2.3]{Pre91},
we have $V(1)\otimes_k V(1)\cong V(2)\oplus k$,
where $V(2)$ is isomorphic to the symmetric part of the top level and $k$ is isomorphic to the anti-symmetric one.
Hence Theorem \ref{main theorem} also holds for $p=3$.
\end{Remark}

\end{document}